\DeclareMathAlphabet{\mathcalligra}{T1}{calligra}{m}{n}
 \theoremstyle{plain}
 \newtheorem{thm}{Theorem}[section]
 \newtheorem{lmm}[thm]{Lemma}
 \newtheorem{corol}[thm]{Corollary}
 \numberwithin{equation}{section} 
 \numberwithin{figure}{section} 
 \newtheorem{prop}[thm]{Proposition}
 \theoremstyle{remark}
 \newtheorem{rmk}[thm]{Remark}
 \newtheorem*{acknowledgement*}{Acknowledgement}
 \theoremstyle{definition}
 \newtheorem{defi}[thm]{Definition}
\def\I{\mathcal I}
\def\C{\mathcal C}
\def\P{\mathcal P}
\def\Pb{\mathcal P}
\newcommand{\Co}{{\bf\mathfrak{C}}}
\newcommand{\R}{\mathbb R}
\newcommand{\N}{\mathbb N}
\newcommand{\be}{\begin{equation}}
\newcommand{\ee}{\end{equation}}
\newcommand{\tu}{\tilde{u}}
\DeclareMathOperator{\spt}{spt}
\def\bx{{\bf x}}
\newcommand{\ksm}{{\bf\mathcalligra{k}} \ }
\newcommand{\kbig}{{\bf \mathcalligra{K}} \ }
\date{\today}
\author{Giuseppe Buttazzo}
\author{Thierry Champion}
\author{Luigi De Pascale}
\title[Continuity for multimarginal singular costs]{Continuity and estimates for multimarginal optimal transportation problems with singular costs}
\subjclass[2010]{49J45, 49N15, 49K30}
\keywords{Multimarginal optimal transportation, Monge-Kantorovich problem, duality theory, Coulomb cost}
\begin{document}
\maketitle

\begin{abstract} We consider some repulsive multimarginal optimal transportation problems which include, as a particular case, the Coulomb cost. We prove a regularity property of the minimizers (optimal transportation plan) from which we deduce existence and some basic regularity of a maximizer for the dual problem (Kantorovich potential). This is then applied to obtain some estimates of the cost and to the study of continuity properties.
\end{abstract}

\section{Introduction}

This paper deals with the following variational problem. Let $\rho\in\Pb(\R^d)$ be a probability measure, let $N>1$ be an integer and let
$$c(x_1,\dots,x_N)=\sum_{1\le i<j\le N} \phi( |x_i-x_j|), $$
with $\phi:(0,+\infty) \to \R$ satisfying the assumptions which will be described in the next section; in particular for $\phi(t)=1/t$ we have the usual Coulomb repulsive cost. Consider the set of probabilities on $\R^{Nd}$
$$\Pi(\rho)=\big\{P\in\Pb(\R^{dN})\ :\ \pi^i_\sharp P=\rho\hbox{ for all }i\big\},$$
where $\pi^i$ denotes the projection on the $i$-th copy of $\R^d$ and $\pi^i_\sharp P$ is the push-forward measure. We aim to minimize the total transportation cost
\be\label{main}
C(\rho)=\min_{P\in\Pi(\rho)}\int_{\R^{Nd}}c(x_1,\dots,x_N)\,dP(x_1,\dots,x_N).
\ee
This problem is called a multimarginal optimal transportation problem and elements of $\Pi(\rho)$ are called transportation plans for $\rho$. Some general results about multimarginal optimal transportation problems are available in \cite{carlier2003class, kellerer1984, pass2011uniqueness, pass2012local, racrus1998}. Results for particular cost functions are available, for example in \cite{gangbo1998optimal} for the quadratic cost, with some generalization in \cite{heinich2002probleme}, and in \cite{carlier2008optimal} for the determinant cost function.

Optimization problems for the cost function $C(\rho)$ in \eqref{main} intervene in the so-called {\it Density Functional Theory} (DFT), we refer to \cite{hohenberg1964, kohn1965} for the basic theory of DFT  and to \cite{gori2010density, gori2009density, seidl1999strong, seidl2007strictly, seidl1999strictly} for recent development which are of interest for us. Some new applications are emerging for example in \cite{ghoussoub2011self}. In the particular case of the Coulomb cost there are also many other open questions related to the applications. Recent results on the topic are contained in \cite{buttazzo2012, colombo2013multimarginal, colombo2013equality, cotar2013density, friesecke2013n, mendl2013kantorovich} and some of them will be better described in the subsequent sections. For more general repulsive costs we refer to the recent survey \cite{dimarino2015}. The literature quoted so far is not at all exhaustive and we refer the reader to the bibliographies of the cited papers for a more detailed picture.

Since the functions $\phi$ we consider are lower semicontinuous, the functional $\rho \mapsto C(\rho)$ above is naturally lower semicontinuous on the space of probability measures equipped with the tight convergence. In general it is not continuous as the following example shows. Take $N=2$, $\phi(t)=t^{-s}$ for some $s>0$, and
$$\rho=\frac12\delta_x+\frac12\delta_y\;,\qquad\rho_n=\Big(\frac12+\frac1n\Big)\delta_x+\Big(\frac12-\frac1n\Big)\delta_y\quad\hbox{for }n\ge1,$$
with $x\ne y$, which gives
$$C(\rho)=\frac{1}{|x-y|^s}\;,\qquad C(\rho_n)=+\infty\quad\hbox{for }n\ge1.$$
In the last section of the paper we show that the functional $C(\rho)$ is continuous or even Lipschitz on suitable subsets of $\Pb(\R^d)$ which are relevant for the applications.

Problem \eqref{main} admits the following Kantorovich dual formulation.

\begin{thm}[Proposition 2.6 in \cite{kellerer1984}] The equality
\be\label{firstdual}
\begin{split}
\min_{P\in\Pi(\rho)}\int_{\R^{Nd}}c(x_1,\dots,x_N)&\,dP(x_1,\dots,x_N) = \sup_{u\in \I_\rho}\bigg\{N\int u\,d\rho\ : \\
& u(x_1)+\dots+u(x_N)\le c(x_1,\dots,x_N) \bigg\}
\end{split}
\ee
holds, where $\I_ \rho$ denotes the set of $\rho$-integrable functions and the pointwise inequality is satisfied everywhere. 
\end{thm}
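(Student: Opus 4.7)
The plan is to split the result into the easy weak inequality $\min\ge\sup$, the nontrivial strong duality $\min\le\sup$ (which I will obtain by Fenchel--Rockafellar combined with a truncation of $c$), and a symmetrization step that collapses the naturally asymmetric dual into the single-potential form \eqref{firstdual}.

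\emph{Weak duality and existence of a primal minimizer.} For any $P\in\Pi(\rho)$ and any admissible $u$, integrating the pointwise inequality $u(x_1)+\cdots+u(x_N)\le c(x_1,\ldots,x_N)$ against $P$ and using the marginal constraint $\pi^i_\sharp P=\rho$ immediately gives $\int c\,dP\ge N\int u\,d\rho$, so $\min\ge\sup$. Attainment of the primal minimum follows from Prokhorov's theorem (tightness of $\Pi(\rho)$ is inherited from tightness of the common marginal $\rho$) together with narrow lower semicontinuity of $P\mapsto\int c\,dP$, which holds because $c$ is lsc and, under the assumptions on $\phi$, bounded below.

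\emph{Strong duality.} For $k\in\N$ set $c_k=\min(c,k)$. Each $c_k$ is bounded and lsc, and for such costs one has the classical multimarginal Kantorovich duality
$$\min_{P\in\Pi(\rho)}\int c_k\,dP=\sup\Big\{\sum_{i=1}^N\int u_i\,d\rho\;:\;u_i\in C_b(\R^d),\ \sum_i u_i(x_i)\le c_k(x_1,\ldots,x_N)\Big\},$$
which one proves by applying the Fenchel--Rockafellar theorem in the product space $C_b(\R^d)^N$, with the linear coupling $(u_1,\ldots,u_N)\mapsto\sum_i u_i\circ\pi^i$. On the primal side, monotone convergence $\int c_k\,dP\uparrow\int c\,dP$ together with narrow compactness of $\Pi(\rho)$ and lsc of $P\mapsto\int c_j\,dP$ for fixed $j$ yield $\inf_P\int c_k\,dP\uparrow\inf_P\int c\,dP$ by a standard diagonal argument on minimizers $P_k$. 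On the dual side any family admissible for $c_k$ is admissible for $c$, so passing $k\to\infty$ gives $\min\le\sup$ over the asymmetric potentials $(u_1,\ldots,u_N)$.

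\emph{Symmetrization and expected obstacle.} Since $c$ is symmetric and every marginal of any $P\in\Pi(\rho)$ is the same measure $\rho$, admissibility and objective value of the general dual are invariant under permutations $(u_1,\ldots,u_N)\mapsto(u_{\sigma(1)},\ldots,u_{\sigma(N)})$ for $\sigma\in S_N$; by convexity of the constraint, averaging over $\sigma$ produces an admissible symmetric tuple $(u,\ldots,u)$ with $u=\frac1N\sum_i u_i$ and unchanged value $N\int u\,d\rho$, which is precisely the form in \eqref{firstdual}. The main obstacle I anticipate is the strong-duality step when $c$ takes the value $+\infty$ on large sets: Fenchel--Rockafellar cannot be applied directly to $c$, and the truncation argument must be run so that the limits on the primal and dual sides match — especially at configurations $\rho$ for which $C(\rho)=+\infty$, where the identity is nevertheless trivial once one knows that the dual supremum is also $+\infty$.
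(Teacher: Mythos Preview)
The paper does not prove this theorem at all: it is quoted verbatim as Proposition~2.6 of Kellerer~\cite{kellerer1984} and stated without argument. The only thing the paper actually proves in this vicinity is the symmetrization step showing that the single-potential dual \eqref{firstdual} and the $N$-potential dual \eqref{secdual} have the same value, by averaging an admissible $N$-tuple $(u_1,\dots,u_N)$ to $u=\tfrac1N\sum_i u_i$; your third step reproduces exactly this argument.

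Your first two steps therefore go well beyond what the paper does, supplying a self-contained proof where the paper is content to cite. The approach you outline (weak duality by integration, Fenchel--Rockafellar for the bounded truncations $c_k=\min(c,k)$, then passing to the limit via compactness of $\Pi(\rho)$ and monotone convergence) is the standard route and is essentially how Kellerer's result is obtained. Two small points worth tightening: (i) Fenchel--Rockafellar in the form you invoke is most cleanly stated for \emph{continuous} bounded costs, so you should either note that $c_k$ can itself be approximated from below by $C_b$ functions, or appeal directly to the lsc version of Kantorovich duality; (ii) your dual potentials come out in $C_b(\R^d)$, whereas the stated supremum ranges over $\I_\rho$ --- this is harmless since $C_b\subset\I_\rho$ and weak duality already gives the reverse inequality over $\I_\rho$, but it is worth saying explicitly. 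With these clarifications your argument is correct.
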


Thanks to the symmetries of the problem we also have that the right-hand side of \eqref{firstdual} coincides with
\be\label{secdual}
\sup_{u_i\in \I_\rho}\bigg\{\sum_{i=1}^N \int u_i\,d\rho\ :\ u_1(x_1)+\dots+u_N(x_N)\le c(x_1,\dots,x_N)\bigg\}.
\ee
In fact, the supremum in \eqref{secdual} is a priori larger than the one in \eqref{firstdual}; however, since for any admissible $N$-tuple $(u_1,\dots,u_N)$ the function
$$u(x)=\frac{1}{N}\sum_{i=1}^N u_i(x)$$
is admissible in \eqref{firstdual}, equality holds.

\begin{defi} A function $u$ will be called a Kantorovich potential if it is a maximizer for the right-hand-side of \eqref{firstdual}.
\end{defi}

The paper \cite{kellerer1984} contains a general approach to the duality theory for multimarginal optimal transportation problems. A different approach which make use of a weaker definition of the dual problem (two marginals case)  is introduced in \cite{beiglbock2012general} and may be applied to this situation too \cite{buttazzo2012}. Existence of Kantorovich potentials is the topic of Theorem 2.21 of \cite{kellerer1984}. That theorem requires that there exist $h_1, \dots, h_N \in L^1_\rho$ such that
$$c(x_1, \dots, x_N) \leq h_1(x_1) + \dots+ h_N(x_N),$$
and so it does not apply to the costs we consider in this paper, as for example the costs of Coulomb type. For Coulomb type costs, the existence of a Kantorovich potential is proved in \cite{depascale2015} under the additional assumption that $\rho$ is absolutely continuous with respect to the Lebesgue measure in $\R^d$. As a consequence of our main estimate, here we extend the existence result to a larger class of probability measures $\rho$. We then use the Kantorovich potentials as a tool.

We adopt the notation $\bx=(x_1,\dots,x_N)\in\R^{Nd}$ so that $x_i\in\R^d$ for each $i\in\{1,\dots,N\}$. Also, we denote the cost of a transport plan $P\in\Pi(\rho)$ by
$$\Co(P):=\int_{\R^{Nd}}c(x_1,\dots,x_N)\,dP(x_1,\dots,x_N).$$

Finally, for $\alpha>0$ we may also introduce the natural truncation of the cost
\[
c_\alpha (x_1,\dots,x_N)=\sum_{1\le i<j\le N}\min\big\{\phi(|x_i-x_j|), \phi(\alpha) \big\} = \sum_{1\le i<j\le N} \phi_\alpha(|x_i-x_j|),
\]
with the natural notation $\phi_\alpha(t):=\min\big\{\phi(t),\phi(\alpha)\big\}$ and the corresponding transportation costs $C_\alpha(\rho)$ and $\Co_\alpha(P)$.

\section{Results}
We assume that $\phi$ satisfies the following properties:
\begin{enumerate}
\item $\phi$ is continuous from $(0,+\infty)$ to $[0,+\infty)$.
\item $\lim_{t\to0^+}\phi(t)=+\infty$;
\item $\phi$ is strictly decreasing;
\end{enumerate}

\begin{rmk}
A careful attention to the present work shows that all our results also hold when (3) is replaced by the weaker hypothesis :
\begin{quote}
(3') $\phi$ is bounded at $+\infty$, that is $\sup\{ | \phi(t) |  : t \ge 1 \} <+\infty$;
\end{quote}
Under this assumption, one has to replace $\phi$ and $\phi^{-1}$ in the statements respectively by $\Phi(t) = \sup\{\phi(s) : s \ge t \}$ and
$\Phi^{-1}(t) = \inf\{s : \phi(s)=t \}$. For example in Theorem \ref{mainprop} the number $\alpha$ should be chosen lower than $\Phi^{-1}\big(\frac{N^2(N-1)}{2}\Phi(\beta)\big)$. Even if less general, we believe that the present form and stronger hypothesis (3) makes our approach and arguments more clear.
\end{rmk}

\begin{defi} For every $\rho\in\P(\R^d)$ the measure of concentration of $\rho$ at scale $r$ is defined as
$$\mu_\rho(r)=\sup_{x\in\R^d}\rho\big(B(x,r)\big).$$
\end{defi}

In particular, if $\rho\in L^1(\R^d)$ we have $\mu_\rho(r)=o(1)$ as $r\to0$, and more generally
$$\rho\in L^p(\R^d)\ \Longrightarrow\ \mu_\rho(r)=o(r^{d(p-1)/p})\quad\hbox{ as }r\to0.$$
The main role of the measures of concentration is played in the following assumption.

\medskip\noindent{\bf Assumption (A): }{\it We say that $\rho$ has small concentration with respect to $N$ if
$$\lim_{r\to0}\mu_\rho(r)<\frac{1}{N(N-1)^2}.$$}

For $\alpha>0$ we denote by 
$$D_\alpha:=\big\{\bx=(x_1,\dots,x_N)\ :\ |x_i-x_j|<\alpha\mbox{ for some }i \neq j\big\}$$
the open strip around the singular set where at least two of the $x_i$ coincide. Finally, we denote by $\R^{d(N-1)}\otimes_i A$ the Cartesian product on $N$ factors the $i$-th of which is $A$ while all the others are copies of $\R^d$.

\begin{lmm}\label{punti}
Assume that $\rho$ satisfies assumption {\bf(A)}. Let $\bx:=\bx^1\in\R^d$ and let $\beta$ be such that
$$\mu_\rho(\beta)<\frac{1}{N(N-1)^2}\;.$$
Then for every $P\in\Pi(\rho)$ there exist $\bx^2,\dots,\bx^N\in\spt P$ such that
\be\label{beta}
\beta<|x^i_j-x^k_\sigma|\qquad\mbox{ for all $j,\sigma$ whenever $k\ne i$.} \footnote{Il me semble que c'est plus pr\'ecis, et on en a besoin dans Theorem 3.4}
\ee
\end{lmm}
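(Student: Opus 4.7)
I would prove Lemma~\ref{punti} by induction, selecting $\bx^2,\ldots,\bx^N$ from $\spt P$ one at a time. The base case is the given $\bx^1$. At stage $k+1$, assuming $\bx^1,\ldots,\bx^k$ have been chosen so that \eqref{beta} holds pairwise among them, I would exhibit $\bx^{k+1}\in\spt P$ whose components are all $\beta$-separated from the components of the previously chosen tuples in the sense prescribed by \eqref{beta}.

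The selection rests on a measure-theoretic estimate. For each coordinate position $\sigma\in\{1,\ldots,N\}$, define the forbidden set $F_\sigma\subset\R^d$ for the component $x^{k+1}_\sigma$ as the union of the balls $B(x^i_j,\beta)$ over the index pairs $(i,j)$ with $i\le k$ that position $\sigma$ must avoid according to \eqref{beta}. Each such ball has $\rho$-mass at most $\mu_\rho(\beta)$ by definition of the measure of concentration, so $\rho(F_\sigma)$ is bounded by the number of balls times $\mu_\rho(\beta)$; the marginal identity $\pi^\sigma_\sharp P=\rho$ then gives $P\bigl((\pi^\sigma)^{-1}(F_\sigma)\bigr)=\rho(F_\sigma)$. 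A union bound over the $N$ positions $\sigma$ shows that the forbidden region in $\R^{Nd}$ has $P$-measure at most $\sum_\sigma \rho(F_\sigma)$, and the careful count described below produces the bound $N(N-1)^2\mu_\rho(\beta)$ at the critical stage $k+1=N$, which is strictly below $1$ by hypothesis. The complement of the forbidden region thus has positive $P$-mass, and since $\spt P$ has full $P$-measure the complement meets $\spt P$; any point in this intersection serves as $\bx^{k+1}$.

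The main obstacle is the bookkeeping required to match the sharp constant $\frac{1}{N(N-1)^2}$. A naive accounting that puts $kN$ centers in each $F_\sigma$ (one per previously chosen component $x^i_j$, $i\le k$, $1\le j\le N$) would yield the bound $kN^2\mu_\rho(\beta)$, requiring $\mu_\rho(\beta)<\frac{1}{N^2(N-1)}$ at $k=N-1$, which is strictly stronger than what is assumed. The correct count puts only $k(N-1)$ centers in each $F_\sigma$, exploiting the index-pair structure in \eqref{beta} to discard, for each previous tuple $\bx^i$ and each target position $\sigma$, the component that need not be avoided by $\sigma$. With this refinement, the forbidden $P$-mass is bounded by $N\cdot k(N-1)\mu_\rho(\beta)$, matching at $k=N-1$ the quantity $N(N-1)^2\mu_\rho(\beta)<1$ exactly.
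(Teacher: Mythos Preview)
Your approach is exactly the paper's: an inductive selection of $\bx^j\in\spt P$ outside a union of cylinders $\R^{d(N-1)}\otimes_\alpha B(x^k_i,\beta)$, with the count $(j-1)\,N(N-1)$ at step $j$ obtained by imposing $\alpha\ne i$; this is precisely your ``refined count'' of $k(N-1)$ centers per coordinate position.

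There is, however, a real gap in your justification of that refinement---and the paper's own proof shares it. You claim the refinement ``exploits the index-pair structure in \eqref{beta} to discard, for each previous tuple $\bx^i$ and each target position $\sigma$, the component that need not be avoided by $\sigma$.'' But \eqref{beta} as written demands $|x^i_j-x^k_\sigma|>\beta$ for \emph{all} $j$ and $\sigma$ whenever $k\ne i$: every component of every previous tuple must be avoided by every position, and nothing may be discarded. Imposing all $N^2$ constraints per pair of tuples, the forbidden $P$-mass at the final step is bounded only by $N^2(N-1)\mu_\rho(\beta)$, which can exceed $1$ under assumption~{\bf(A)}, so the argument as you describe it does not close. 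What the restriction $\alpha\ne i$ (equivalently, your discard of the constraint $j=\sigma$) actually establishes is the weaker conclusion $|x^i_j-x^k_\sigma|>\beta$ for $j\ne\sigma$. That version suffices for Theorem~\ref{mainprop}, since the rebuilt plans $\tilde P_i$ only pair marginals carrying distinct component indices; but you should state explicitly which form of \eqref{beta} you are proving and check separately that it is enough wherever the lemma is invoked.
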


\begin{proof}
By definition of marginals and by the choice of $\beta$ we have
$$P\big(\R^{d(N-1)}\otimes_\alpha B(x_i^k,\beta)\big)<\frac{1}{N(N-1)^2}$$
for all indices $\alpha,i,k$. Then for any $j\in\{2,\dots,N\}$
$$P\big(\cup_{k=1}^{j-1}\cup_{i=1}^{N}\cup_{\alpha\ne i}\R^{d(N-1)}\otimes_\alpha B(x_i^k,\beta)\big)<\frac{j-1}{N-1},$$
and, since $P$ is a probability measure, this allows us to choose
$$\bx^j\in\spt P\setminus\big(\cup_{k=1}^{j-1}\cup_{i=1}^{N}\cup_{\alpha\ne i}\R^{d(N-1)}\otimes_\alpha B(x_i^k,\beta)\big).$$
It is easy to verify that the $\bx^j$ above satisfy the desired property \eqref{beta}.
\end{proof}

\subsection{Estimates for the optimal transport plans}\label{comp}

\begin{thm}\label{mainprop}
Let $\rho \in \Pb(\R^d)$ and assume that $\rho$ satisfies assumption {\bf(A)}. Let $P\in\Pi(\rho)$ be a minimizer for the transportation cost $C(\rho)$ (or $C_\alpha(\rho)$) and let $\beta$ be such that
$$\mu_\rho(\beta)<\frac{1}{N(N-1)^2}\;.$$
Then
$$\spt P\subset\R^{Nd}\setminus D_\alpha\qquad\hbox{whenever}\qquad\alpha<\phi^{-1}\Big(\frac{N^2(N-1)}{2}\phi(\beta)\Big).$$
\end{thm}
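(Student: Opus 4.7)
The plan is to argue by contradiction using a cyclic rearrangement of $N$ points in $\spt P$, turning the separation provided by Lemma \ref{punti} into a strict improvement of $\Co(P)$. First I would assume $\spt P \cap D_\alpha \ne \emptyset$ and pick $\bx^1 \in \spt P$ with $|x^1_{i_0} - x^1_{j_0}| < \alpha$ for some $i_0 \ne j_0$; then I would apply Lemma \ref{punti} to produce $\bx^2, \dots, \bx^N \in \spt P$ satisfying $|x^i_s - x^k_t| > \beta$ whenever $i \ne k$, and form the cyclically shifted points
\[
\by^l := (x^l_1, x^{l+1}_2, \dots, x^{l+N-1}_N), \qquad l = 1, \dots, N,
\]
with indices taken modulo $N$. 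The key observation is that inside $\by^l$ any two coordinates $y^l_i$ and $y^l_j$ (with $i \ne j$) inherit their values from $\bx^{l+i-1}$ and $\bx^{l+j-1}$, whose labels differ modulo $N$, so Lemma \ref{punti} gives $|y^l_i - y^l_j| > \beta$; hence $c(\by^l) \le \binom{N}{2}\phi(\beta)$, and summing
\[
\sum_{l=1}^N c(\by^l) \le \frac{N^2(N-1)}{2}\phi(\beta),
\]
while $c(\bx^1) \ge \phi(|x^1_{i_0} - x^1_{j_0}|) > \phi(\alpha) > \frac{N^2(N-1)}{2}\phi(\beta)$ by the hypothesis on $\alpha$.

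The next step is to turn this pointwise gain into an honest perturbation of $P$. For $\epsilon \in (0, \beta/2)$ I would take the disjoint balls $U_k := B(\bx^k, \epsilon)$, set $\hat P_k := P|_{U_k}/P(U_k)$ (positive by $\bx^k \in \spt P$), and form the tensor products
\[
Q_l := \hat\rho^l_1 \otimes \hat\rho^{l+1}_2 \otimes \cdots \otimes \hat\rho^{l+N-1}_N, \qquad \hat\rho^k_i := \pi^i_\sharp \hat P_k,
\]
each concentrated near $\by^l$. For $\eta \in (0, \min_k P(U_k))$ the candidate competitor is
\[
P' := P - \eta \sum_{k=1}^N \hat P_k + \eta \sum_{l=1}^N Q_l,
\]
which is a non-negative measure (since $\eta \hat P_k \le P|_{U_k}$ and the $U_k$ are disjoint), and the reindexation $k = l + i - 1$ modulo $N$ gives $\pi^i_\sharp \sum_l Q_l = \sum_k \hat\rho^k_i$, so the marginals cancel and $P' \in \Pi(\rho)$.

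Finally, I would compare costs via
\[
\Co(P') - \Co(P) = \eta \left[\sum_l \int c \, dQ_l - \sum_k \int c \, d\hat P_k \right].
\]
The pointwise bounds above translate to $\sum_l \int c \, dQ_l \le \frac{N^2(N-1)}{2}\phi(\beta - 2\epsilon)$ (any two coordinates of a point in $\spt Q_l$ are at distance $> \beta - 2\epsilon$) and $\sum_k \int c \, d\hat P_k \ge \phi(|x^1_{i_0} - x^1_{j_0}| + 2\epsilon)$ (from the contribution of the pair $(i_0, j_0)$ on $U_1$). Letting $\epsilon \to 0$ and using continuity of $\phi$ on $(0,+\infty)$, the strict inequality $\phi(\alpha) > \frac{N^2(N-1)}{2}\phi(\beta)$ yields $\Co(P') < \Co(P)$ for $\epsilon$ sufficiently small, contradicting optimality of $P$. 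The same argument applies to $C_\alpha$, since $\phi_\alpha \le \phi$ everywhere while $\phi_\alpha(t) = \phi(\alpha)$ whenever $t < \alpha$. The most delicate point in the plan is the verification that this swap preserves the marginal $\rho$, and this is exactly where the cyclic structure of the $\by^l$ (as opposed to an arbitrary rearrangement) is essential.
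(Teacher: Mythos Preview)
Your proposal is correct and follows essentially the same approach as the paper: argue by contradiction, use Lemma~\ref{punti} to obtain well-separated points $\bx^2,\dots,\bx^N$, restrict $P$ to small neighborhoods of the $\bx^k$, and replace these local pieces by cyclic tensor products of their marginals to strictly lower the cost. The differences are purely presentational --- you normalize each local piece to a probability and scale by a single parameter $\eta$, whereas the paper rescales the pieces to equal mass via coefficients $\lambda_i$ and uses a discrete size parameter $k$ --- but the construction and the two key estimates (the $\phi(\alpha)$ lower bound from the bad pair in $\bx^1$ and the $\frac{N^2(N-1)}{2}\phi(\beta-2\epsilon)$ upper bound from the cyclic plans) are identical.
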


\begin{proof}
We make the proof for the case where $P$ is a minimizer for $C(\rho)$, the argument being the same for $C_\alpha(\rho)$.
Take $\alpha$ as in the statement and $\delta \in (0,\alpha)$. Note from the hypotheses on $\phi$ that $\alpha < \beta$.
Assume that $\bx^1=(x^1_1,\dots,x^1_N)\in D_{\delta} \cap\spt P$ and choose points $\bx^2,\dots,\bx^N$ in $\spt P$ as in Lemma \ref{punti}.
Let $k$ be large enough so that $\delta+2\beta/k < \alpha$.
Since all the chosen points belong to $\spt P$, we have
$$P\big(Q(\bx^i,\beta/k)\big)>0$$
where $Q(\bx^i,\beta/k) = \Pi_{j=1}^N B(\bx^i_j,\beta/k)$.
Denote by $P_i=P_{\big|Q(\bx^i,\frac{\beta}{k})}$ and choose constants $\lambda_i\in(0,1]$ such that
$$\lambda_1|P_1|=\dots=\lambda_N|P_N|:=\varepsilon,$$
where $|P_i|$ denotes the mass of the measure $P_i$. We then write
$$P=\lambda_1 P_1+\dots+\lambda_N P_N+P_R\qquad(P_R\mbox{ is the remainder}),$$
and we estimate from below the cost of $P$ as follows:
$$\Co(P)=\Co(P_R)+\sum_{i\ge1}\Co(\lambda_i P_i)\ge \Co(P_R)+\varepsilon\phi(\alpha)$$
where we used the fact that $\bx^1 \in D_\delta$ and $\delta+2\beta/k < \alpha$.
We consider now the marginals $\nu^i_1,\dots,\nu^i_N$ of $\lambda_i P_i$ and build the new local plans
$$\tilde{P}_1=\nu^1_1\times\nu^2_2\times\dots\nu^N_N,\quad
\tilde{P}_2=\nu_1^2\times\nu_2^3\times\dots\nu_N^1,\quad\dots,\quad
\tilde{P}_N=\nu_1^N\times\nu_2^1\times\dots\nu_N^{N-1}.$$
To write the estimates from above it is convenient to remark that we may also write
$$\tilde{P}_i=\nu_1^i\times\dots\nu_k^{i+k-1}\times\dots\nu_N^{i+N-1}$$
where we consider the upper index (mod $N$).
Consider now the transport plan
$$\tilde{P}:=P_R+\tilde{P}_1+\dots+\tilde{P}_N;$$
it is straightforward to check that the marginals of $\tilde{P}$ are the same as the marginals of $P$. Moreover $|\tilde{P}_i|= \lambda_i |P_i|$. So we can estimate the cost of $\tilde{P}$ from above using the distance between the coordinates of the centers of the cubes established in Lemma \ref{punti}, and we obtain
$$\Co(\tilde{P})=\Co(P_R)+\sum_{i=1}^N \Co(\tilde{P}_i)\le
\Co(P_R)+N\frac{N(N-1)}{2}\phi(\beta-2\beta/k)\varepsilon.$$
Then if
$$\alpha<\phi^{-1}\Big(\frac{N^2(N-1)}{2}\phi(\beta-\frac{2\beta}{k})\Big)$$
we have that
$$\Co(\tilde{P})<\Co(P),$$
thus contradicting the minimality of $P$. 
It follows that the strip $D_{\alpha_1}$ and $\spt P$ do not intersect if $\alpha$ satisfies the inequality above and since $k$ may be arbitrarily large and $\phi$ is continuous we obtain the conclusion for any $\alpha_1 \in  (0,\alpha)$, which concludes the proof.
\end{proof}



The Theorem above allow us to estimate the costs in term of $\beta$.

\begin{prop}\label{estimation}
Let $\rho\in\Pb(\R^d)$ and assume that $\rho$ satisfies assumption {\bf(A)}. Then if $\beta$ is such that
$$\mu_\rho(\beta)<\frac{1}{N(N-1)^2}$$
we have
$$C(\rho)\le \frac{N^3(N-1)^2}{4}\phi(\beta).$$
Moreover,
$$C(\rho)=C_\alpha(\rho)\qquad\mbox{whenever}\qquad\alpha\le\phi^{-1}\Big(\frac{N^2(N-1)}{2}\phi(\beta)\Big).$$
\end{prop}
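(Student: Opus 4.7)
The plan is to exhibit, for any $\alpha \in (0, \bar\alpha]$ with $\bar\alpha := \phi^{-1}\bigl(\frac{N^2(N-1)}{2}\phi(\beta)\bigr)$, a single plan $P_\alpha \in \Pi(\rho)$ whose support lies entirely outside $D_{\bar\alpha}$, and then to read both assertions off from this support property. Working with $C_\alpha(\rho)$ rather than $C(\rho)$ is convenient, since $C(\rho)$ might a priori be infinite, whereas $c_\alpha$ is bounded and continuous, so that the existence of a minimizer $P_\alpha$ of $C_\alpha(\rho)$ follows by the usual tightness argument and direct method.

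Next, I apply Theorem \ref{mainprop} to $P_\alpha$, which yields $\spt P_\alpha \cap D_{\alpha_1} = \emptyset$ for every $\alpha_1 < \bar\alpha$. Writing $D_{\bar\alpha} = \bigcup_{\alpha_1 < \bar\alpha} D_{\alpha_1}$, I conclude $\spt P_\alpha \cap D_{\bar\alpha} = \emptyset$, and in particular $\spt P_\alpha \cap D_\alpha = \emptyset$ since $\alpha \le \bar\alpha$. This limiting step is the only subtlety to watch: Theorem \ref{mainprop} is stated with the strict inequality $\alpha_1 < \bar\alpha$, whereas the present proposition allows $\alpha = \bar\alpha$, but the union identity above absorbs the boundary case.

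From $\spt P_\alpha \cap D_\alpha = \emptyset$ one has $|x_i - x_j| \ge \alpha$ on $\spt P_\alpha$ for all $i \neq j$, so $\phi_\alpha(|x_i - x_j|) = \phi(|x_i - x_j|)$ and hence $c = c_\alpha$ on $\spt P_\alpha$; this gives $\Co(P_\alpha) = \Co_\alpha(P_\alpha) = C_\alpha(\rho)$, which combined with the trivial chain $C_\alpha(\rho) \le C(\rho) \le \Co(P_\alpha)$ produces the equality $C(\rho) = C_\alpha(\rho)$. Moreover, the stronger bound $|x_i - x_j| \ge \bar\alpha$ on $\spt P_\alpha$ together with the monotonicity of $\phi$ gives $\phi(|x_i - x_j|) \le \phi(\bar\alpha) = \frac{N^2(N-1)}{2}\phi(\beta)$; summing over the $\binom{N}{2}$ pairs and integrating against $P_\alpha$ then yields $C(\rho) \le \Co(P_\alpha) \le \frac{N(N-1)}{2} \cdot \frac{N^2(N-1)}{2}\phi(\beta) = \frac{N^3(N-1)^2}{4}\phi(\beta)$, which is the announced estimate. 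I do not anticipate any genuine obstacle beyond the limit argument noted above.
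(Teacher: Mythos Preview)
Your argument is correct and follows the same route as the paper: apply Theorem~\ref{mainprop} to an optimal plan to place its support outside the strip, then read off both the upper bound on $C(\rho)$ and the equality $C(\rho)=C_\alpha(\rho)$. The one difference is purely organizational---you work throughout with a minimizer $P_\alpha$ of the truncated cost, whereas the paper first uses a minimizer of $C$ itself for the bound (tacitly assuming one exists before $C(\rho)<\infty$ is known) and only then turns to $P_\alpha$ for the equality; your version avoids that small circularity.
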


\begin{proof} Let $P$ be an optimal transport plan for the cost $C$. According to Theorem \ref{mainprop}, if $\alpha$ is as in the statement then the support of $P$ may intersect only the boundary of $D_\alpha$ and this means that $c\le\frac{N(N-1)}{2}\phi(\alpha)$ on the support of $P$. Then
$$C(\rho)\le\int\frac{N(N-1)}{2}\phi(\alpha)\,dP=\frac{N(N-1)}{2}\phi(\alpha)$$
and, taking the largest admissible $\alpha$ we obtain
$$C(\rho)\le\frac{N^3(N-1)^2}{4}\phi(\beta),$$
which is the desired estimate. Let now $P_\alpha$ be an optimal plan for the cost $C_\alpha$, then also $\spt P_\alpha\subset\R^{Nd}\setminus D_\alpha$ so that $c=c_\alpha$ on $\spt P_\alpha$. It follows that
$$C(\rho)\le\int c\,dP_\alpha=\int c_\alpha\,dP_\alpha=C_\alpha (\rho),$$
and since the opposite inequality is always true we conclude the proof.
\end{proof}

As a consequence of Proposition \ref{estimation} above, Proposition 2.6 of \cite{kellerer1984} and Theorem 2.21 of \cite{kellerer1984} we also obtain an extension of the duality theorem of \cite{depascale2015} to a wider set of $\rho$.

\begin{thm}\label{dualita}
Let $\rho\in\Pb(\R^d)$ and assume that $\rho$ satisfies assumption {\bf(A)}. Then
\be\label{newdual}
C(\rho)=\max_{u\in \I_\rho}\bigg\{N\int u\,d\rho\ :\ 
u(x_1)+\dots+u(x_N)\le c(x_1,\dots,x_N)\bigg\}.
\ee
Moreover, whenever $\alpha\le\phi^{-1}\Big(\frac{N^2(N-1)}{2}\phi(\beta)\Big)$, any Kantorovich potential $u_\alpha$ for $C_\alpha$ is also a Kantorovich potential for $C$.
\end{thm}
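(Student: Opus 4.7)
The plan is to use the truncated costs $c_\alpha$ as a bridge. The second assertion of the theorem actually implies the first, so the whole argument reduces to producing a Kantorovich potential $u_\alpha$ for $C_\alpha$ and verifying that it is simultaneously optimal in the dual of $C$.

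First I would observe that for any $\alpha>0$, the truncated cost $c_\alpha$ is uniformly bounded by $\frac{N(N-1)}{2}\phi(\alpha)$ on $\R^{Nd}$, since $\phi_\alpha \le \phi(\alpha)$ by definition. Consequently the majorization hypothesis of Theorem 2.21 in \cite{kellerer1984} (existence of $h_i \in \I_\rho$ with $c_\alpha(\bx) \le \sum_i h_i(x_i)$) holds trivially with the constants $h_i \equiv \frac{N-1}{2}\phi(\alpha)$. That theorem then provides a Kantorovich potential $u_\alpha \in \I_\rho$ for $C_\alpha$, i.e.\ a function satisfying
$$u_\alpha(x_1)+\dots+u_\alpha(x_N)\le c_\alpha(x_1,\dots,x_N)\quad\text{everywhere, and}\quad N\int u_\alpha\,d\rho = C_\alpha(\rho).$$

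Next, since $c_\alpha \le c$ pointwise, every function admissible in the dual of $c_\alpha$ is also admissible in the dual of $c$; in particular $u_\alpha$ is admissible in the right-hand side of \eqref{newdual}. Invoking Proposition \ref{estimation}, which yields $C(\rho)=C_\alpha(\rho)$ under the hypothesis on $\alpha$, we obtain
$$N\int u_\alpha\,d\rho \;=\; C_\alpha(\rho) \;=\; C(\rho).$$
On the other hand, Proposition 2.6 of \cite{kellerer1984} (namely the duality \eqref{firstdual}) identifies $C(\rho)$ with the supremum appearing on the right-hand side of \eqref{newdual}. Therefore $u_\alpha$ attains this supremum, which establishes at once the attainment of the maximum in \eqref{newdual} and the claim that any Kantorovich potential for $C_\alpha$ is a Kantorovich potential for $C$.

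The argument presents essentially no analytic obstruction: the key conceptual input is Proposition \ref{estimation}, which precisely allows one to bypass the fact that the singular cost $c$ itself does not satisfy the $\I_\rho$-majorization hypothesis of Kellerer's existence theorem. The only step deserving attention is the direction of the inclusion of admissibility sets, which goes the right way thanks to $c_\alpha \le c$.
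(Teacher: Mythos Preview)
Your proposal is correct and follows essentially the same route as the paper: apply Kellerer's duality and existence results to the bounded truncated cost $c_\alpha$, use $c_\alpha\le c$ to make the resulting potential $u_\alpha$ admissible for $c$, and invoke Proposition \ref{estimation} to identify $C_\alpha(\rho)=C(\rho)$. The only cosmetic difference is that you cite \eqref{firstdual} directly to get $C(\rho)=\sup\{\dots\}$, whereas the paper rederives the weak-duality inequality by monotonicity; this changes nothing of substance.
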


\begin{proof}
By monotonicity of the integral the left-hand side of \eqref{newdual} is always larger than the right-hand side. Proposition 2.6 and Theorem 2.21 of \cite{kellerer1984} may be applied to the cost $c_\alpha$ to obtain
$$C_\alpha (\rho)=\max_{u\in \I_\rho}\bigg\{N\int u\,d\rho\ :\ 
u(x_1)+\dots+u(x_N)\le c_\alpha(x_1,\dots,x_N)\bigg\}.$$
Since $\rho$ satisfies assumption {\bf(A)}, by Proposition \ref{estimation} for $\alpha$ sufficiently small we have that there exists $u_\alpha \in \I_\rho$ such that
$$u_\alpha (x_1)+\dots+u_\alpha(x_N)\le c_\alpha(x_1,\dots,x_N)\le c(x_1,\dots,x_N),$$
and
$$C(\rho)=C_\alpha(\rho)=N\int u_\alpha\,d\rho\;,$$
as required.
\end{proof}

\begin{rmk}\label{complementary}
Note that if $u$ is a Kantorovich potential for $C$ and $P$ is optimal for $C$ then
$u(x_1)+\ldots+u(x_N)=c(x_1,\ldots,x_N)$ holds $P$-almost everywhere.
\end{rmk}

\section{Applications}

\subsection{Estimates for the cost} 
Since the parameter $\beta$ in the previous section is naturally related to the summability of $\rho$, we can obtain some estimate of the cost $C(\rho)$ in term of the available
norms of $\rho$.

\begin{prop}\label{lp}
Let $\rho\in\Pb(\R^d)\cap L^p(\R^d)$ for some $p>1$. Then, if $P\in\Pi(\rho)$ is optimal for the transportation cost $C(\rho)$, we have
$$P(D_\alpha)=0\quad\mbox{whenever}\quad\alpha < \phi^{-1}\bigg(\frac{N^2(N-1)}{2} \phi\bigg(\bigg(\frac{1}{\omega_d\big(N(N-1)^2\big)^{p'}\|\rho\|_p^{p'}}\bigg)^{1/d}\bigg)\bigg),$$
where $\omega_d$ denotes the Lebesgue measure of the ball of radius 1 in $\R^d$ and $p'$ the conjugate exponent of $p$. It follows that
\be\label{eq21}
C(\rho)\le \frac{N^3(N-1)^2}{4} \phi\bigg(\bigg(\frac{1}{\omega_d\big(N(N-1)^2\big)^{p'}\|\rho\|_p^{p'}}\bigg)^{1/d}\bigg) .
\ee
\end{prop}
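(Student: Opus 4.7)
The plan is to reduce Proposition \ref{lp} to Theorem \ref{mainprop} and Proposition \ref{estimation} by producing an explicit admissible value of $\beta$ in terms of $\|\rho\|_p$. The only ingredient beyond the previously proved results is a Hölder bound on the concentration function $\mu_\rho$.

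First I would apply Hölder's inequality to estimate the mass of a ball: for any $x \in \R^d$ and $r>0$,
$$\rho(B(x,r)) = \int \mathbf{1}_{B(x,r)}\, \rho\, d\leb \le \|\rho\|_p \cdot |B(x,r)|^{1/p'} = \|\rho\|_p\, \omega_d^{1/p'}\, r^{d/p'}.$$
Taking the supremum over $x$, this gives $\mu_\rho(r) \le \|\rho\|_p\, \omega_d^{1/p'}\, r^{d/p'}$. Solving the inequality $\|\rho\|_p\, \omega_d^{1/p'}\, r^{d/p'} < \frac{1}{N(N-1)^2}$ for $r$ produces the explicit threshold
$$\beta_0 := \left(\frac{1}{\omega_d\bigl(N(N-1)^2\bigr)^{p'}\|\rho\|_p^{p'}}\right)^{1/d},$$
so that $\mu_\rho(\beta)<\frac{1}{N(N-1)^2}$ for every $\beta<\beta_0$. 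In particular $\rho$ satisfies assumption \textbf{(A)}.

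Second, for each such $\beta<\beta_0$ I would apply Theorem \ref{mainprop}: any minimizer $P$ of $C(\rho)$ satisfies $\spt P \subset \R^{Nd}\setminus D_\alpha$ whenever $\alpha < \phi^{-1}\bigl(\tfrac{N^2(N-1)}{2}\phi(\beta)\bigr)$. Since $D_\alpha$ is open this is equivalent to $P(D_\alpha)=0$. Letting $\beta\nearrow \beta_0$ and using the continuity of $\phi$ (hence of $\phi^{-1}$ on its range), the conclusion $P(D_\alpha)=0$ holds for every $\alpha<\phi^{-1}\bigl(\tfrac{N^2(N-1)}{2}\phi(\beta_0)\bigr)$, which is the first claim of the proposition.

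Third, for the cost estimate \eqref{eq21} I would invoke Proposition \ref{estimation}, which yields $C(\rho)\le \tfrac{N^3(N-1)^2}{4}\phi(\beta)$ for every $\beta<\beta_0$, and pass to the limit $\beta\nearrow\beta_0$ using the continuity of $\phi$.

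No real obstacle arises: the only subtlety is that Hölder's inequality at $r=\beta_0$ gives only the non-strict bound $\mu_\rho(\beta_0)\le \frac{1}{N(N-1)^2}$, so one cannot feed $\beta_0$ directly into Theorem \ref{mainprop}; this is handled by the limiting argument just described, which preserves the threshold $\beta_0$ in the final statement thanks to the continuity of $\phi$.
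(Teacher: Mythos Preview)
Your proof is correct and follows essentially the same route as the paper: bound $\mu_\rho$ via H\"older's inequality to obtain the explicit threshold $\beta_0$, then invoke Theorem~\ref{mainprop} and Proposition~\ref{estimation}. You are in fact slightly more careful than the paper in handling the strict versus non-strict inequality at $\beta=\beta_0$ through a limiting argument, whereas the paper simply writes $\mu_\rho(\beta)\le\frac{1}{N(N-1)^2}$ and proceeds.
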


\begin{proof}
Let
$$\beta\le\bigg(\frac{1}{\omega_d\big(N(N-1)^2\big)^{p'}\|\rho\|_p^{p'}}\bigg)^{1/d}.$$
By H\"older inequality we have
$$\int_{B(x,\beta)}\rho(y)\,dy\le\|\rho\|_p(\omega_d\beta^d)^{1/p'}\le\frac{1}{N(N-1)^2}\;,$$
so that
$$\mu_\rho(\beta)\le\frac{1}{N(N-1)^2}\;.$$
The desired inequality \eqref{eq21} now follows by Theorem \ref{estimation}.
\end{proof}

\begin{rmk} The Coulomb type costs $\phi(t)=t^{-s}$ for $s>0$ play a relevant role in several applications.
\begin{enumerate}
\item For $\phi(t)=t^{-s}$ estimate \eqref{eq21} above takes the form 
$$C(\rho)\le\frac{N^3(N-1)^2}{4}\bigg(\omega_d\big(N(N-1)^2\big)^{p'}\|\rho\|_p^{p'}\bigg)^{s/d}.$$

\item In dimension $d=3$ and for $s=1$ the set
$${\mathcal H}:=\Big\{\rho\in L^1(\R^3)\ :\ \rho\ge0,\ \sqrt{\rho}\in H^1(\R^3),\ \int\rho\,dx=1\Big\}$$ 
plays an important role in the Density Functionals Theory. In fact, Lieb in \cite{lieb1983} proved that $\rho\in {\mathcal H}$ if and only if there exists a wave function $\psi\in H^1(\R^{3N})$ such that
$$\pi^i_\sharp|\psi|^2dx=\rho,\qquad\mbox{for }i=1,\dots,N.$$
Taking $s=1$, $d=3$, $p=3$ in Proposition \ref{lp} gives \\
$$C(\rho)\le C N^{7/2} (N-1)^3\|\rho\|_3^{1/2}=C N^{7/2} (N-1)^3\|\sqrt{\rho}\|_6\le C N^{7/2} (N-1)^3\|\sqrt{\rho}\|_{H^1}.$$ \\
\end{enumerate}
\end{rmk}

\subsection{Estimates for Kantorovich potentials}
In general, a Kantorovich potential $u$ is a $\rho$-integrable function which can be more or less freely modified outside a relevant set. In this section we show the existence of Kantorovich potentials which are more regular. 

\begin{lmm}\label{infconv} Let $u$ be a Kantorovich potential; then there exists a Kantorovich potential $\tu$ which satisfies
$$u\le\tu,$$
and
\be\label{uinfconv}
\tu(x)=\inf\bigg\{c(x,y_2,\dots,y_N)-\sum_{j\ge2}\tu(y_j)\ :\ y_j\in\R^d\bigg\}\qquad\forall x\in\R^d.
\ee
\end{lmm}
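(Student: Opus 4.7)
The plan is to realize $\tilde u$ as a pointwise maximal admissible function dominating $u$, obtained by Zorn's lemma, and then to read off identity \eqref{uinfconv} from maximality via a single-point bump argument.

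Concretely, I would work with the set
$$\mathcal F:=\Big\{v\in\I_\rho\,:\, v\ge u\text{ and } v(x_1)+\dots+v(x_N)\le c(\bx)\text{ for every }\bx\in\R^{Nd}\Big\},$$
endowed with the pointwise order. Since $u\in\mathcal F$ it is non-empty, and I would show that every totally ordered chain $\{v_\alpha\}\subset\mathcal F$ admits an upper bound, namely its pointwise supremum $V$. Admissibility of $V$ is what makes this step work: given a tuple $\bx=(x_1,\dots,x_N)$ and $\varepsilon>0$, totality of the chain lets me select a single $v_\beta$ that is within $\varepsilon$ of $V$ at all $N$ points at once, giving $\sum_i V(x_i)\le\sum_i v_\beta(x_i)+N\varepsilon\le c(\bx)+N\varepsilon$ and the conclusion after letting $\varepsilon\to 0$. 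Integrability $V\in\I_\rho$ follows from the uniform bound $N\int v\,d\rho\le C(\rho)$ coming from Theorem \ref{dualita} together with $V\ge u\in\I_\rho$ (extracting a cofinal increasing sequence and invoking monotone convergence). Zorn's lemma then furnishes a maximal element $\tilde u\in\mathcal F$.

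To prove that $\tilde u$ satisfies \eqref{uinfconv}, write $T(\tilde u)(x):=\inf\{c(x,y_2,\dots,y_N)-\sum_{j\ge2}\tilde u(y_j):y_j\in\R^d\}$; admissibility of $\tilde u$ already reads $\tilde u\le T(\tilde u)$. For the reverse inequality I argue by contradiction, supposing $\tilde u(x_0)<T(\tilde u)(x_0)$ at some $x_0$ and defining $\tilde u'$ to coincide with $\tilde u$ outside $x_0$ and to take at $x_0$ a finite value $\alpha\in(\tilde u(x_0),\min\{T(\tilde u)(x_0),\tilde u(x_0)+1\})$. Admissibility of $\tilde u'$ at a tuple $\bz=(z_1,\dots,z_N)$ is checked case by case: if no $z_i$ equals $x_0$, nothing changes; if exactly one, say $z_1=x_0$, then the definition of $T(\tilde u)(x_0)$ as an infimum gives
$$\tilde u'(z_1)+\sum_{j\ge2}\tilde u(z_j)\le T(\tilde u)(x_0)+\sum_{j\ge2}\tilde u(z_j)\le c(\bz);$$
and if two or more $z_i$ coincide with $x_0$, then $c(\bz)=+\infty$ by the singularity $\lim_{t\to 0^+}\phi(t)=+\infty$, so the constraint is vacuous. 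Since $\tilde u'$ differs from $\tilde u$ by a finite amount at a single point it remains in $\I_\rho$, so $\tilde u'\in\mathcal F$ strictly exceeds $\tilde u$, contradicting maximality. Finally, combining $u\le\tilde u$, admissibility of $\tilde u$, and Theorem \ref{dualita} yields
$$\frac{C(\rho)}{N}=\int u\,d\rho\le\int\tilde u\,d\rho\le\frac{C(\rho)}{N},$$
so $\tilde u$ is itself a Kantorovich potential.

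The principal subtlety I expect is the coincidence case in the bump argument: for a finite, regular cost, raising $\tilde u$ at a single point could in principle violate admissibility at tuples where $x_0$ is repeated, and one would have to argue more carefully. The repulsive behaviour of $\phi$ near the origin sidesteps exactly this obstruction by making $c$ infinite whenever two coordinates collide, so admissibility is automatic on those tuples and the single-point modification is always legitimate.
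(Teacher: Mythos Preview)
Your argument is correct and reaches the same destination as the paper, but by a somewhat different path. Both proofs produce $\tilde u$ as a \emph{maximal} admissible function dominating $u$: the paper writes it directly as the pointwise supremum
\[
\tilde u(x)=\sup\big\{v(x)\ :\ u\le v,\ v\text{ admissible}\big\},
\]
while you obtain a maximal element via Zorn's lemma. The substantive difference lies in how ``maximality $\Rightarrow$ \eqref{uinfconv}'' is established. The paper introduces the averaging operator
\[
\hat v \;=\; \frac{\bar v+(N-1)v}{N},\qquad \bar v(x):=\inf\Big\{c(x,y_2,\dots,y_N)-\sum_{j\ge2}v(y_j)\Big\},
\]
checks (using only the symmetry of $c$) that $\hat v$ is again admissible with $\hat v\ge v$, and notes that $\hat v(x)>v(x)$ exactly where \eqref{uinfconv} fails; hence a maximal $\tilde u$ must satisfy $\tilde u=\hat{\tilde u}$, i.e.\ \eqref{uinfconv}. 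Your single--point bump instead modifies $\tilde u$ only at $x_0$ and relies on $c(\bz)=+\infty$ whenever two coordinates coincide to dispose of the ``repeated $x_0$'' case. Both are valid; the paper's $\hat{\,\cdot\,}$ trick is cost--agnostic (it would work for any symmetric cost, singular or not) and, iterated countably, gives a constructive route avoiding the Axiom of Choice, whereas your approach is more explicitly justified on the existence side but is tailored to the diagonal singularity --- a point you rightly flag in your final paragraph.

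Two minor technical remarks. First, the bound $N\int v\,d\rho\le C(\rho)$ you use for integrability is just weak duality and does not actually require Theorem~\ref{dualita}; in fact, since $u$ is already a Kantorovich potential, every $v\in\mathcal F$ satisfies $\int v\,d\rho=\int u\,d\rho$, which settles integrability immediately. Second, the ``cofinal increasing sequence'' extraction from an arbitrary chain is not automatic (uncountable chains need not have countable cofinality), so measurability of the pointwise supremum $V$ deserves a word; this is a standard issue (present, in fact, also in the paper's direct-sup formulation) and can be handled, e.g., by the previous observation that all $v\in\mathcal F$ coincide with $u$ $\rho$-a.e., or by passing to essential suprema.
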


\begin{proof}
We first define 
$$\overline{u}(x):=\inf\bigg\{c(x,y_2,\dots,y_N)-\sum_{j\ge2}u(y_j)\ :\ y_j\in\R^d\bigg\};$$
then we consider
$$\hat{u}(x)=\frac{\overline{u}(x)+(N-1)u(x)}{N}.$$
Since $u(x)\le\overline{u}(x)$ we have also $u(x)\le\hat{u}(x)$ ; moreover it is straightforward to check that
$$\hat{u}(x_1)+\dots+\hat{u}(x_N)\le c(x_1,\dots,x_N)\qquad\forall x_i\in\R^d.$$
Notice that if $u$ does not satisfy \eqref{uinfconv} at some $x$ then $u(x) < \hat{u}(x)$. We then consider
$$\tu(x)=\sup\big\{v(x)\ :\ u\le v\ \rho-\mbox{a.e. and }v(x_1)+\dots+v(x_n)\le c(x_1,\dots,x_N)\big\}$$
which satisfies all the required properties (since $\tu = \hat{\tu}$).
\end{proof}

Taking some constant $\alpha_1,\dots,\alpha_N$ such that $\sum\alpha_i=0$ we may define $u_i(x)=\tu(x)+\alpha_i$ and we obtain an $N$-tuple of functions which is optimal for problem \eqref{secdual} and satisfies
$$u_i(x)=\inf\bigg\{c(y_1,\dots,y_{i-1},x,y_{i-1},\dots,y_N)-\sum_{j\ne i}u_j(y_j)\ :\ y_j\in\R^d\bigg\}.$$
The choice of the constants $\alpha_i$ can be made so that the functions $u_i$ take specific and admissible values at some points. A final, elementary, remark is that $\tu$ is the arithmetic mean of the $u_i$s.
 
\begin{thm}\label{stimeuniformi}
Let $\rho\in\Pb(\R^d)$. Assume that $\rho$ satisfies assumption {\bf(A)}, and let $\beta$ be such that $\mu_\rho(\beta)\le\frac{1}{N(N-1)^2}$. Let $u$ be a Kantorovich potential which satisfies
$$u(x)=\inf\bigg\{c(x,\dots,y_2,\dots,y_N)-\sum_{j\ge2}u(y_j)\ :\ y_j\in\R^d\bigg\}.$$
Then for any choice of $\alpha$ as in Theorem \ref{mainprop} it holds
\be\label{normasup}
\sup_{\R^d}|u|\le N(N-1)^2\phi\left(\frac{\alpha}{2}\right).
\ee
\end{thm}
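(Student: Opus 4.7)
The plan is to combine the dual feasibility $\sum_i u(y_i)\le c(\by)$ (valid everywhere) with the pointwise complementary slackness of Remark~\ref{complementary} on $\spt P$, via a swap construction reminiscent of the proof of Theorem~\ref{mainprop}. The upper bound on $u$ will arise from replacing one coordinate of a well-chosen tuple in $\spt P$ by $x$; the lower bound will then follow almost immediately from the assumed $c$-transform identity on $u$.

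Fix $x\in\R^d$ and apply Lemma~\ref{punti} with $\bx^1 := (x,x,\dots,x)$ to obtain tuples $\bx^2,\dots,\bx^N \in \spt P$ with $|x - x^j_\ell|>\beta$ for every $j\in\{2,\dots,N\}$ and every index $\ell$. Since each $\bx^j\in\spt P$, Theorem~\ref{mainprop} yields $|x^j_k - x^j_\ell|\ge\alpha$ for $k\ne\ell$. Moreover, Remark~\ref{complementary}, combined with the upper semicontinuity of $u$ (as an infimum of functions continuous on $\R^{Nd}\setminus D_\alpha$) and the continuity of $c$ on that set, upgrades the $P$-a.e.\ equality $\sum_\ell u(x^j_\ell)=c(\bx^j)$ to a pointwise one on $\spt P$.

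For each $(j,s)\in\{2,\dots,N\}\times\{1,\dots,N\}$, let $\by^{j,s}$ be obtained from $\bx^j$ by replacing its $s$-th coordinate by $x$. Applying feasibility at $\by^{j,s}$ and subtracting complementary slackness at $\bx^j$ gives
$$u(x) - u(x^j_s) \le c(\by^{j,s}) - c(\bx^j) = \sum_{k\ne s}\bigl[\phi(|x - x^j_k|) - \phi(|x^j_s - x^j_k|)\bigr].$$
Summing over the $N(N-1)$ pairs $(j,s)$, the left-hand side collapses via $\sum_s u(x^j_s)=c(\bx^j)$ to $N(N-1)u(x) - \sum_{j\ge 2}c(\bx^j)$. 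On the right-hand side, the positive contributions, each bounded by $\phi(|x - x^j_k|)\le\phi(\beta)$, total at most $N(N-1)^2\phi(\beta)$, while the symmetric double sum of the terms $\phi(|x^j_s - x^j_k|)$ reassembles into exactly $-2\sum_{j\ge 2}c(\bx^j)$. After cancellation one obtains $N(N-1)u(x)\le N(N-1)^2\phi(\beta)$, so $u(x)\le(N-1)\phi(\beta)\le(N-1)\phi(\alpha/2)$, using $\beta>\alpha>\alpha/2$ and monotonicity of $\phi$.

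For the lower bound, the $c$-transform identity gives $u(x) = \inf_{y}\bigl[c(x,y) - \sum_{j\ge 2}u(y_j)\bigr] \ge -(N-1)\sup u$, since $c\ge 0$ and each $u(y_j)\le\sup u$; combined with the previous upper bound this yields $|u(x)|\le(N-1)^2\phi(\alpha/2)\le N(N-1)^2\phi(\alpha/2)$. The main obstacle is the combinatorial bookkeeping in the summation step: one must verify that the symmetric structure causes the $c(\bx^j)$ contributions to cancel favorably (rather than accumulate), so that the crude bound $\phi(|x - x^j_k|)\le\phi(\beta)$ suffices and the estimate closes without a circular dependence on $\sup u$.
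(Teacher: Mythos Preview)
Your argument is correct and takes a genuinely different route from the paper's. The paper fixes a reference tuple $\overline{\bx}\in\spt P$, introduces shifted potentials $u_i=u+\alpha_i$ normalized so that $u_i(\overline{x}_i)=\tfrac{1}{N}c(\overline{\bx})$, and then splits into two cases according to whether $x$ lies within $\alpha/2$ of some $\overline{x}_i$; the ``near'' case requires a second tuple obtained from Lemma~\ref{punti}, and the loss to $\phi(\alpha/2)$ comes precisely from this case distinction. You instead apply Lemma~\ref{punti} directly with $\bx^1=(x,\dots,x)$, which produces support points uniformly $\beta$-far from $x$, and then exploit symmetry by summing the swap inequalities over all coordinates so that the $c(\bx^j)$ contributions cancel favorably (the double sum $\sum_s\sum_{k\ne s}\phi(|x^j_s-x^j_k|)$ indeed equals $2c(\bx^j)$, so the bookkeeping closes). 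This avoids both the case split and the auxiliary potentials, and in fact yields the sharper bound $\sup|u|\le (N-1)^2\phi(\beta)$, improving the paper's constant by a factor of $N$ and replacing $\phi(\alpha/2)$ by $\phi(\beta)$. One minor point: your upper-semicontinuity justification for upgrading Remark~\ref{complementary} to all of $\spt P$ is valid (each $x\mapsto c(x,y_2,\dots,y_N)-\sum u(y_j)$ is continuous into $(-\infty,+\infty]$, hence u.s.c., and an infimum of u.s.c.\ functions is u.s.c.), but it is simpler---and this is what the paper does implicitly---to observe that the points $\bx^j$ in Lemma~\ref{punti} are selected from sets of positive $P$-measure, so one may always choose them in the $P$-full set where the equality of Remark~\ref{complementary} holds.
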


\begin{proof}
Let $P$ be an optimal transport plan for $C$, let $\alpha$ be as in Theorem \ref{mainprop} and take $\overline{\bx} \in \spt P$, then
$|\overline{\bx}_i-\overline{\bx}_j|\geq \alpha$ for $i\ne j$. From Remark \ref{complementary} we can assume that
$u(\overline{\bx}_1)+\ldots+u(\overline{\bx}_N) = c(\overline{\bx}_1,\dots,\overline{\bx}_N)$. From the above discussion we may consider a Kantorovich $N$-tuple $(u_1,\dots,u_N)$ obtained from $u$ which is optimal for \eqref{secdual} and satisfies 
$$u_i(x)=\inf\bigg\{c(y_1,\dots,y_{i-1},x,y_{i-1},\dots,y_N)-\sum_{j\ne i}u_j(y_j)\ :\ y_j\in\R^d\bigg\}\qquad\hbox{for all }x$$
and $u_i(\overline{\bx}_i)=\frac{1}{N}c(\overline{\bx}_1,\dots,\overline{\bx}_N) \geq 0$ for all $i$.
If $x\notin\cup_{i=2}^N B(\overline{\bx}_i,\frac{\alpha}{2})$ it holds:
$$u_1(x)\le c(x,\overline{\bx}_2,\dots,\overline{\bx}_N)-\sum_{j\ge2}u_j(\overline{\bx}_j) \le c(x,\overline{\bx}_2,\dots,\overline{\bx}_N)\le\frac{N(N-1)}{2} \phi(\frac{\alpha}{2}).$$
Taking $\bx^1 = \overline{\bx}$, we apply Lemma \ref{punti} and obtain a point $\bx^2 \in \spt P \setminus D_\alpha$ such that
$|\bx^2_j - \overline{\bx}_\sigma| \geq \beta \geq \alpha$ for all $j$ and $\sigma$, and from Remark \ref{complementary} we may assume that
\[
- \sum_{j =2}^N u_j(\bx^2_j)= u_1(\bx^2_1) - c(\bx^2_1,\dots,\bx^2_N) \le u_1(\bx^2_i) \le \frac{N(N-1)}{2}\phi(\frac{\alpha}{2})
\]
where we used $\bx^2_1 \notin\cup_{j \ge 2} B(\overline{\bx}_j,\alpha)$. Fix $i \geq 2$, it follows that if $x\in B(\overline{\bx}_i,\frac{\alpha}{2})$ then
$|x - \bx^2_j| \ge \frac{\alpha}{2}$ for all $j \ge 2$ so that
\[
u_1(x)  \le  c(x,\bx^2_2,\dots,\bx^2_N)-\sum_{j =2}^N u_j(\bx^2_j) \le N(N-1) \phi(\frac{\alpha}{2}) 
\]
This concludes the estimate from above for $u_1$ on $\R^d$, and analogously for all $u_i$.
The formula above now allows us to find an estimate from below which, again, we write for $u_1$ as
$$u_1(x)=\inf\bigg\{c(x,y_2,\dots,y_N)-\sum_{j\ge2}u_j(y_j)\ :\ y_j\in\R^d\bigg\} \ge- N(N-1)^2 \phi(\frac{\alpha}{2}).$$
Then for all $i$ one has 
$$\|u_i\|_\infty\le N(N-1)^2\phi(\frac{\alpha}{2})\;,$$
and analogously for $u=\frac{1}{N}\sum_i u_i$ the same estimate holds.
\end{proof}

\begin{rmk} Theorem \ref{stimeuniformi} above applies to all costs considered in this paper including $c_\alpha$ obtained replacing the function $\phi$ by its truncation $\phi_\alpha$.
\end{rmk}

The next theorem shows that under the usual assumptions on $\rho$ and some additional assumptions on $\phi$ there exists a Kantorovich potential which is Lipschitz and semiconcave with Lipschitz and semiconcavity constants depending on the concentration of $\rho$. In the next statement we denote by $Sc(u)$ the semiconcavity constant, that is the lowest nonnegative constant $K$ such that $u-K|\cdot|^2$ is concave.

\begin{thm}\label{lipsemiconc}
Let $\rho\in\Pb(\R^d)$. Assume that $\rho$ satisfies assumption {\bf(A)}, and let $\beta$ be such that $\mu_\rho(\beta)\le\frac{1}{N(N-1)^2}$.
\begin{itemize}
\item If $\phi$ is of class $\C^1$ and for all $t>0$ there exists a constant $\ksm(t)$ such that
\be\label{uLip}
|\phi'(s)|<\ksm(t)\qquad\hbox{for all }s>t
\ee
then there exists a Kantorovich potential $u$ for problem \eqref{firstdual} such that
$$Lip(u)\le\ksm\left(\phi^{-1}\Big(\frac{N^2(N-1)}{2}\phi(\beta)\Big)\right).$$
\item If $\phi$ is of class $\C^2$ and for all $t>0$ there exists a constant $\kbig(t)$ such that
$$\phi''(s)-\frac{\phi'(s)}{s}<\kbig(t)\qquad\hbox{for all }s>t$$
then there exists a Kantorovich potential $u$ for problem \eqref{firstdual} such that
$$Sc(u)\le\kbig\left(\phi^{-1}\Big(\frac{N^2(N-1)}{2}\phi(\beta)\Big)\right).$$
\end{itemize}
\end{thm}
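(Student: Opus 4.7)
The strategy is to exploit the inf-convolution representation supplied by Lemma \ref{infconv},
\[
u(x) = \inf_{\bar y = (y_2, \dots, y_N)\in\R^{d(N-1)}} F_{\bar y}(x), \qquad F_{\bar y}(x) := c(x, y_2, \dots, y_N) - \sum_{j\ge 2} u(y_j).
\]
Since $F_{\bar y}(x) - F_{\bar y}(x')$ depends on $x$ and $x'$ only through the $N - 1$ terms $\phi(|\cdot - y_j|)$ (all other contributions to $c$ cancel), the regularity of $\phi$ away from zero should transfer to $u$, provided that the infimum is effectively attained at $\bar y$ for which every $|x - y_j|$ is bounded below away from zero, ideally by the threshold $\alpha := \phi^{-1}\big(\frac{N^2(N-1)}{2}\phi(\beta)\big)$ from Theorem \ref{mainprop}.

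To justify this restriction, I would combine the uniform bound $\|u\|_\infty \le N(N-1)^2 \phi(\alpha/2)$ from Theorem \ref{stimeuniformi} with the strict decrease of $\phi$: if some $|x - y_j|$ is too small then $\phi(|x - y_j|)$ dominates $F_{\bar y}(x)$ so that $F_{\bar y}(x) > u(x)$, and such $\bar y$ cannot compete. An alternative and cleaner route is to pass to the truncated problem: by Theorem \ref{dualita} it suffices to construct a suitable Kantorovich potential $u_\alpha$ for $c_\alpha$, and since $\phi_\alpha = \min(\phi, \phi(\alpha))$ is globally $\ksm(\alpha)$-Lipschitz, the corresponding inf-convolution is automatically well-behaved (for the semiconcavity part, however, this truncation trick fails at the kink $r = \alpha$, so one must keep working with the original $\phi$ and argue separately that the effective minimizers stay in the regime $|x - y_j| \ge \alpha$).

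For the Lipschitz estimate, given $x, x' \in \R^d$ with $|x - x'|$ small and a near-minimizing $\bar y$ for $u(x')$ furnished by the above restriction, the mean value theorem on each $\phi(|\cdot - y_j|)$ gives
\[
u(x) - u(x') \le \sum_{j\ge 2} \big[\phi(|x - y_j|) - \phi(|x' - y_j|)\big] \le (N-1)\,\ksm(\alpha)\,|x - x'|,
\]
from which the Lipschitz bound follows upon swapping $x$ and $x'$. For the semiconcavity, the Hessian of $x \mapsto \phi(|x - y|)$ at $|x - y| = r$ has eigenvalues $\phi''(r)$ in the radial direction and $\phi'(r)/r$ with multiplicity $d-1$ in the orthogonal directions, both bounded above by $\phi''(r) - \phi'(r)/r \le \kbig(\alpha)$ for $r \ge \alpha$. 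Since $f - K|\cdot|^2$ being concave is stable under pointwise infimum, summing over $j$ and taking the infimum preserves the semiconcavity.

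The main technical obstacle is to obtain the precise constants $\ksm(\alpha)$ and $\kbig(\alpha)$ appearing in the statement rather than the $(N-1)$-times larger bounds produced by the naive term-by-term estimate. Eliminating this factor should require a sharper analysis of the minimizing $\bar y$---for instance, an argument showing that, in a configuration near the infimum, only one of the distances $|x - y_j|$ can approach the critical threshold $\alpha$, so the effective sum collapses to a single dominant term---or a more refined choice of comparison potential in Lemma \ref{infconv}. A further technical point is the $x$-dependence of the effective restriction domain, which forces the Lipschitz argument to be first established on small scales $|x - x'|$ compared to $\alpha$ and then extended globally via the uniform $L^\infty$ bound on $u$.
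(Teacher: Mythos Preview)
Your overall plan is sound and essentially matches the paper's, but you have misjudged the key simplification. The paper uses the truncation route for \emph{both} the Lipschitz and the semiconcavity claims: by Theorem~\ref{dualita} a Kantorovich potential $u_\alpha$ for $c_\alpha$ is already a potential for $c$, and by Lemma~\ref{infconv} one may take $u_\alpha$ in inf-convolution form with respect to $c_\alpha$; then one simply checks that each $x\mapsto c_\alpha(x,y_2,\dots,y_N)$ is uniformly Lipschitz and uniformly semiconcave and passes to the infimum.

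Your objection that ``the truncation trick fails at the kink $r=\alpha$'' for semiconcavity is incorrect, and this is the gap in your proposal. Since $\phi$ is strictly decreasing, $\phi_\alpha(r)$ equals the constant $\phi(\alpha)$ for $r\le\alpha$ and equals $\phi(r)$ for $r\ge\alpha$; at $r=\alpha$ the one-sided derivatives are $0$ from the left and $\phi'(\alpha)<0$ from the right, so the corner is \emph{concave}. Concretely, for $g(x)=\phi_\alpha(|x-y|)$ one has $D^2g=0$ on $\{|x-y|<\alpha\}$ and $D^2g\le \kbig(\alpha)\,I$ on $\{|x-y|>\alpha\}$, while on the sphere $\{|x-y|=\alpha\}$ the function attains its global maximum $\phi(\alpha)$, so the constant $\phi(\alpha)$ is a supporting paraboloid from above there. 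Equivalently, restricting $K|x|^2-g(x)$ to any line gives a continuous function that is convex on each of the at most three arcs determined by the sphere, with derivative jumping upward at the crossings; hence it is convex. Thus $g$ is globally $K$-semiconcave and the paper's short argument goes through without any need to restrict the infimum, to invoke the $L^\infty$ bound from Theorem~\ref{stimeuniformi}, or to worry about the $x$-dependence of an effective domain.

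On the $(N-1)$ factor: the paper's proof, exactly like yours, differentiates $c_\alpha$ in $x$ and sums the $N-1$ contributions $\phi_\alpha(|x-y_j|)$; it therefore also yields $(N-1)\ksm(\alpha)$ and $(N-1)\kbig(\alpha)$. The absence of this factor in the statement is a cosmetic imprecision of the paper rather than a sign that a sharper argument exists, so you should not look for a mechanism by which ``only one distance approaches $\alpha$''.
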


\begin{proof}
According to Proposition \ref{estimation} and Theorem \ref{dualita} for $\alpha\le\phi^{-1}\Big(\frac{N^2(N-1)}{2}\phi(\beta)\Big)$ any Kantorovich potential $u_\alpha$ for the cost $\Co_\alpha$ is also a potential for $\Co$. According to Lemma \ref{infconv} above we choose a potential $u_\alpha$ satisfying
$$\forall x, \quad u_\alpha(x)=\inf\bigg\{c_\alpha (x,\dots,y_2,\dots,y_N)-\sum_{j\ge2}u(y_j)\ :\ y_j \in\R^d\bigg\}.$$
Since the infimum of uniformly Lipschitz (resp. uniformly semiconcave) functions is still Lipschitz (resp. semiconcave) with the same constant, it is enough to show that the functions
$$x\mapsto c_\alpha(x,\dots,y_2,\dots,y_N)+C$$
are uniformly Lipschitz and semiconcave. To check that it is enough to compute the gradient and the Hessian matrix of these functions and use the respective properties
of the pointwise cost $\phi$.
\end{proof}

\begin{rmk} The above Theorem \ref{lipsemiconc} applies to the Coulomb cost $\phi(t)=1/t$ and more generally to the costs  $\phi(t)=t^{-s}$ for $s>0$.
\end{rmk}

\subsection{Continuity properties of the cost}

In this subsection we study some conditions that imply the continuity of the transportation cost $C(\rho)$ with respect to the tight convergenge on the marginal variable $\rho$. 

\begin{lmm}\label{equidist} Let $\{\rho_n\}\subset\P(\R^3)$ be such that $\rho_n\stackrel{*}{\rightharpoonup}\rho$ and assume that $\rho$ satisfies assumption {\bf(A)}. Let $\beta$ be such that
$$\mu_\rho(\beta)<\frac{1}{N(N-1)^2}.$$
Then for all $\delta\in(0,1)$ there exists $k\in\N$ such that for all $n>k$
$$\mu_{\rho_n}(\delta\beta)<\frac{1}{N(N-1)^2}.$$
\end{lmm}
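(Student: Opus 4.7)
The plan is to argue by contradiction and exploit the Portmanteau characterization of narrow convergence on closed balls, combined with tightness of $\{\rho_n\}$ to keep the centers of the offending balls in a compact region.

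Suppose the conclusion fails. Then there exists $\delta \in (0,1)$, a subsequence (not relabeled) $\rho_n$ and points $x_n \in \R^d$ such that
$$\rho_n(B(x_n,\delta\beta)) \geq \frac{1}{N(N-1)^2}.$$
The first step is to show that the sequence $(x_n)$ is bounded. Since $\rho_n$ converges narrowly to the probability measure $\rho$, the family $\{\rho_n\}$ is tight by Prokhorov's theorem, so there exists a compact set $K \subset \R^d$ with $\rho_n(K) > 1 - \frac{1}{2N(N-1)^2}$ for all $n$. This forces $B(x_n, \delta\beta) \cap K \neq \emptyset$ for every $n$, hence $(x_n)$ lies in the bounded set $\{x : \mathrm{dist}(x,K) \leq \delta\beta\}$. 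Up to a further subsequence we may thus assume $x_n \to x_\infty$.

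Next I would transfer the mass bound to a fixed closed ball centered at $x_\infty$. Since $\delta < 1$, one can pick $\eta > 0$ with $\delta\beta + 2\eta < \beta$. For $n$ large enough that $|x_n - x_\infty| < \eta$, one has the inclusion
$$B(x_n,\delta\beta) \subset \overline{B(x_\infty,\delta\beta+\eta)},$$
so that
$$\frac{1}{N(N-1)^2} \leq \rho_n(B(x_n,\delta\beta)) \leq \rho_n\bigl(\overline{B(x_\infty,\delta\beta+\eta)}\bigr).$$
Applying the Portmanteau theorem to the closed set $\overline{B(x_\infty,\delta\beta+\eta)}$ yields
$$\frac{1}{N(N-1)^2} \leq \limsup_{n\to\infty} \rho_n\bigl(\overline{B(x_\infty,\delta\beta+\eta)}\bigr) \leq \rho\bigl(\overline{B(x_\infty,\delta\beta+\eta)}\bigr) \leq \rho(B(x_\infty,\beta)) \leq \mu_\rho(\beta),$$
where the penultimate inequality uses $\delta\beta+\eta < \beta$. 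This contradicts the standing assumption $\mu_\rho(\beta) < \frac{1}{N(N-1)^2}$ and proves the lemma.

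The only delicate point is ruling out escape to infinity of the centers $x_n$; once tightness is invoked to localize them, the argument reduces to upper semicontinuity of narrow convergence on closed balls, together with the strict inequality $\delta<1$ that gives enough room to fit a closed ball around the limit center inside the open ball $B(x_\infty,\beta)$.
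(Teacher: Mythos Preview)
Your proof is correct and follows essentially the same approach as the paper: argue by contradiction, use tightness of the narrowly convergent sequence to localize the centers $x_n$, pass to a convergent subsequence, and then apply Portmanteau on a slightly enlarged closed ball to contradict $\mu_\rho(\beta)<\frac{1}{N(N-1)^2}$. The only cosmetic difference is that the paper introduces an intermediate radius $\delta'\beta$ with $\delta<\delta'<1$ where you use $\delta\beta+\eta$, which amounts to the same thing.
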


\begin{proof} 
We argue by contradiction assuming that there exists a sequence $\{x_n\}$ such that
$$\frac{1}{N(N-1)^2} \leq \rho_n\big(B(x_n,\delta\beta)\big).$$
Since the sequence $\{\rho_n\}$ is uniformly tight there exists $K$ such that $|x_n|<K$. Up to subsequences we may assume that $x_n\to\tilde{x}$ for a suitable $\tilde{x}$.
Let $\delta' \in (\delta,1)$.
Then, for $n$ large enough, $\overline{B(x_n,\delta\beta)}\subset B(\tilde{x},\delta' \beta)$ and since
$$\rho_n\big(B(x_n,\delta\beta)\big)\le\rho_n\big(\overline{B(x_n,\delta\beta)}\big)\le\rho_n\big(\overline{B(x,\delta' \beta)}\big),$$
and 
$$\limsup\rho_n\big(\overline{B(x,\delta' \beta)}\big)\le\rho\big(\overline{B(x,\delta' \beta)}\big) \le \rho\big(B(x,\beta)\big)$$
we obtain
$$\limsup\rho_n\big(B(x,\delta\beta)\big) < \frac{1}{N(N-1)^2},$$
which is a contradiction.
\end{proof}

\begin{thm}\label{wcont}
Let $\{\rho_n\}\subset\P(\R^3)$ be such that $\rho_n\stackrel{*}{\rightharpoonup}\rho$ with $\rho$ satisfying assumption {\bf(A)}.
Assume that the cost function $\phi$ satiisfies assumption \eqref{uLip}.
Then
$$C(\rho_n)\to C(\rho) \quad \mbox{as $n \to +infty$.}$$
\end{thm}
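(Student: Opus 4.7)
The lower semicontinuity $C(\rho)\le\liminf_n C(\rho_n)$ holds automatically for tight convergence (as already noted in the introduction), so the task reduces to proving $\limsup_n C(\rho_n)\le C(\rho)$. The strategy is to produce, for each sufficiently large $n$, a Kantorovich potential $u_n$ for $\rho_n$ that is \emph{uniformly} bounded and \emph{uniformly} Lipschitz, then extract a subsequential limit $u^\star$ via Ascoli--Arzel\`a that is admissible in the dual problem for $\rho$, and finally pass to the limit in the duality identity $C(\rho_n)=N\int u_n\,d\rho_n$.

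Fix $\delta\in(0,1)$. By Lemma \ref{equidist} there is $n_0$ such that $\mu_{\rho_n}(\delta\beta)<\frac{1}{N(N-1)^2}$ for every $n\ge n_0$, so each such $\rho_n$ satisfies assumption {\bf(A)} uniformly with the common parameter $\delta\beta$. Theorem \ref{dualita} then yields a Kantorovich potential $u_n$ realising $C(\rho_n)=N\int u_n\,d\rho_n$, which we may take in the inf-convolution form of Lemma \ref{infconv}. Choosing any $\alpha<\phi^{-1}\bigl(\tfrac{N^2(N-1)}{2}\phi(\delta\beta)\bigr)$ (the same value for all $n\ge n_0$), Theorem \ref{stimeuniformi} gives $\|u_n\|_\infty\le N(N-1)^2\phi(\alpha/2)=:M$, and Theorem \ref{lipsemiconc}, whose hypothesis is precisely \eqref{uLip}, gives $Lip(u_n)\le \ksm(\alpha)=:L$; both constants are independent of $n$.

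Now take a subsequence (not relabelled) along which $C(\rho_n)\to\limsup_n C(\rho_n)$. Since $(u_n)$ is uniformly bounded and equicontinuous on $\R^d$, Ascoli--Arzel\`a applied on an exhaustion of $\R^d$ by compact sets plus a diagonal extraction produces a further subsequence, still denoted $(u_n)$, with $u_n\to u^\star$ locally uniformly and $\|u^\star\|_\infty\le M$, $Lip(u^\star)\le L$. Passing to the pointwise limit in $u_n(x_1)+\dots+u_n(x_N)\le c(x_1,\dots,x_N)$ shows $u^\star$ is admissible in \eqref{firstdual} for $\rho$, whence $N\int u^\star\,d\rho\le C(\rho)$ by Theorem \ref{dualita}.

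The main analytical point is the passage $\int u_n\,d\rho_n\to\int u^\star\,d\rho$, which is a double limit: both the integrand and the measure vary with $n$. I would handle it via tightness: given $\varepsilon>0$, uniform tightness of $\{\rho_n\}\cup\{\rho\}$ provides a compact $K\subset\R^d$ with $\rho_n(K^c),\rho(K^c)<\varepsilon$ for all $n$; local uniform convergence on $K$ together with weak convergence $\rho_n\rightharpoonup\rho$ against the bounded continuous function $u^\star$ controls the $K$-part, while the tail contributes at most $2M\varepsilon$. This yields $\limsup_n C(\rho_n)=\lim_n N\int u_n\,d\rho_n=N\int u^\star\,d\rho\le C(\rho)$, which combined with the lower semicontinuity concludes the argument. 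The one delicate ingredient, beyond routine bookkeeping, is keeping the constants in Theorems \ref{stimeuniformi}--\ref{lipsemiconc} from degenerating as $n\to\infty$; this is exactly what Lemma \ref{equidist} guarantees via the uniform concentration estimate.
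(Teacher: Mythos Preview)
Your argument is correct and in fact coincides almost verbatim with the \emph{alternative} proof the paper records in the Remark immediately following Theorem~\ref{wcont}: uniform concentration via Lemma~\ref{equidist}, uniform $L^\infty$ and Lipschitz bounds on potentials from Theorems~\ref{stimeuniformi} and~\ref{lipsemiconc}, Ascoli--Arzel\`a, and passage to the limit in the duality identity using tightness.

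The paper's \emph{main} proof, however, takes a different and somewhat shorter route. Instead of working on the dual side, it stays on the primal side and exploits the truncated cost $c_\alpha$: since $c_\alpha$ is bounded and continuous on $\R^{Nd}$, the map $\rho\mapsto C_\alpha(\rho)$ is automatically tight-continuous; then Lemma~\ref{equidist} together with Theorem~\ref{mainprop} (through Proposition~\ref{estimation}) gives a common $\alpha$ for which $C(\rho)=C_\alpha(\rho)$ and $C(\rho_n)=C_\alpha(\rho_n)$ for all large $n$, so continuity of $C$ is inherited from that of $C_\alpha$. This approach never invokes the Lipschitz hypothesis~\eqref{uLip} on $\phi$ and avoids the compactness/extraction machinery entirely. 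Your (and the Remark's) dual approach, by contrast, genuinely uses~\eqref{uLip} through Theorem~\ref{lipsemiconc}, but has the virtue of illustrating how the uniform potential estimates of Section~3.2 can be deployed; it would also adapt to settings where one controls potentials directly but lacks a convenient truncation.
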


\begin{proof}
We first note that the theorem above holds for the costs $c_\alpha$ since they are continuous and bounded in $\R^d$, and from the fact that whenever
$P \in \Pi(\rho)$ there exists $P_n \in \Pi(\rho_n)$ such that  $P_n\stackrel{*}{\rightharpoonup} P$, so that $C_\alpha$ is continuous with respect to weak convergence.

Thanks to Lemma \ref{equidist} and Theorem \ref{mainprop} we infer that there exists $k>0$ and $\alpha >0$ such that
the optimal transport plans for $C(\rho)$ and $C(\rho_n)$ all all supported in $\R^{Nd} \setminus D_\alpha$
for $n \geq k$. But then the functionals $C$ and $C_\alpha$ coincide on $\{\rho\}\cup \{\rho_n\}_{n \geq k}$ and the thesis follows form the continuity of $C_\alpha$.
\end{proof}

\begin{rmk} Under the hypothesis \eqref{uLip} on $\phi$ we may propose the following alternative proof for Theorem \ref{wcont} above.
Since the pointwise cost $c$ is lower semicontinuous, by the dual formulation \eqref{firstdual} the functional $C$ is lower semicontinuous too. Then we only need to prove the inequality
$\limsup_{n\to\infty}C(\rho_n)\leq C(\rho)$.
By Theorems \ref{stimeuniformi} and \ref{lipsemiconc} and Lemma \ref{equidist} above, there exists a constant $K$ and an integer $\nu$ such that for $n\ge\nu$ we can choose a Kantorovich potential $u_n$ for $\rho_n$ and the cost $C$ which is $K$-Lipschitz and bounded by $K$. Up to subsequences we may assume that $u_n\to u$ uniformly on compact sets, so $u$ is $K$-Lipschitz and satisfies
$$u(x_1)+\dots+u(x_N)\le c(x_1,\dots,x_N).$$
It follows that
$$N\int u\,d\rho\le C(\rho)\le\lim_{n\to\infty}C(\rho_n)=\lim_{n\to\infty}N\int u_n\,d\rho_n=N\int u\,d\rho$$
as required.
\end{rmk}

\begin{thm}
Let $\rho_1,\rho_2\in\P(\R^3)$ be such that
$$\mu_{\rho_i}(\beta)<\frac{1}{N(N-1)^2}\qquad i=1,2,$$
for a suitable $\beta>0$. Then for every $\alpha$ as in Theorem \ref{mainprop} we have
$$|C(\rho_1)-C(\rho_2)|\le N^2(N-1)^2 \phi(\frac{\alpha}{2})\|\rho_1-\rho_2\|_{L^1}.$$
\end{thm}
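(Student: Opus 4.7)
The plan is to exploit the dual formulation of $C(\rho)$ together with the uniform $L^\infty$ bound on the Kantorovich potentials established in Theorem \ref{stimeuniformi}. The key observation is that any function $u$ admissible for the dual problem associated with one marginal is automatically admissible for the dual problem associated with any other marginal, since the constraint $u(x_1)+\dots+u(x_N)\le c(x_1,\dots,x_N)$ is pointwise and does not depend on $\rho$.

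First I would pick, for $i=1,2$, a Kantorovich potential $u_i$ for $\rho_i$ satisfying the representation formula \eqref{uinfconv} from Lemma \ref{infconv}; by Theorem \ref{stimeuniformi} applied with the common $\beta$ (and the corresponding $\alpha$ from Theorem \ref{mainprop}), both potentials satisfy
\[
\|u_i\|_\infty \le N(N-1)^2\phi\!\left(\frac{\alpha}{2}\right).
\]
Then the duality (Theorem \ref{dualita}) gives $C(\rho_i)=N\int u_i\,d\rho_i$, while admissibility of $u_1$ in the dual problem for $\rho_2$ yields $C(\rho_2)\ge N\int u_1\,d\rho_2$. Subtracting,
\[
C(\rho_1)-C(\rho_2)\le N\int u_1\,d(\rho_1-\rho_2)\le N\,\|u_1\|_\infty\,\|\rho_1-\rho_2\|_{L^1}\le N^2(N-1)^2\phi\!\left(\frac{\alpha}{2}\right)\|\rho_1-\rho_2\|_{L^1}.
\]

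The symmetric inequality is obtained by interchanging the roles of $\rho_1$ and $\rho_2$ (using $u_2$ instead of $u_1$), and combining the two yields the claimed estimate. There is essentially no hard step here: the whole argument is a one-line consequence of duality once one has (i) existence of a Kantorovich potential for each $\rho_i$ under assumption \textbf{(A)} (guaranteed by Theorem \ref{dualita}) and (ii) a uniform sup-norm bound on those potentials that depends only on the common concentration scale $\beta$ (Theorem \ref{stimeuniformi}); the latter is the genuine technical input, and it is precisely what allows the constant $N^2(N-1)^2\phi(\alpha/2)$ to be universal for the pair $(\rho_1,\rho_2)$.
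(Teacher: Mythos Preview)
Your argument is correct and is essentially the same as the paper's: both proofs use duality together with the $L^\infty$ bound of Theorem~\ref{stimeuniformi}, exploiting that a Kantorovich potential for one marginal is admissible in the dual problem for the other, which yields $C(\rho_1)-C(\rho_2)\le N\int u_1\,d(\rho_1-\rho_2)$ and then the estimate via $\|u_1\|_\infty$. The only cosmetic difference is that the paper assumes $C(\rho_2)\le C(\rho_1)$ and writes the two-sided inequality $N\int u_2\,d(\rho_1-\rho_2)\le C(\rho_1)-C(\rho_2)\le N\int u_1\,d(\rho_1-\rho_2)$ in one line, whereas you treat the two directions separately.
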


\begin{proof} Without loss of generality we may assume that $C(\rho_2) \le C(\rho_1)$. Let $u_1$ and $u_2$ be Kantorovich potentials which satisfy the estimate of Theorem \ref{stimeuniformi} respectively for $\rho_1$ and $\rho_2$ . We have
$$ C (\rho_1) -C(\rho_2) = N \int u_1 d \rho_1 -N \int u_2 d \rho_2 .$$
By the optimality of $u_1$ and $u_2$
$$N\int u_2\,d(\rho_1-\rho_2)\le N\int u_1\,d\rho_1-N\int u_2\,d\rho_2\le N\int u_1\,d(\rho_1-\rho_2)$$
and the conclusion now follows by estimate \eqref{normasup}.
\end{proof}

\begin{corol}  The functional $C(\rho)$ is Lipschitz continuous on any bounded subset of $L^p(\R^d)$ for $p>1$ and in particular on any bounded subset of the space ${\mathcal H}$. 
\end{corol}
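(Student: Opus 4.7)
The plan is to combine the preceding Lipschitz estimate with the concentration bound from Proposition \ref{lp} in order to extract a uniform constant on $L^p$-bounded sets.

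First I would fix a bounded subset $\mathcal{B} \subset L^p(\R^d) \cap \Pb(\R^d)$, so that $M := \sup_{\rho \in \mathcal{B}} \|\rho\|_p < +\infty$. The H\"older computation already used in the proof of Proposition \ref{lp} gives, uniformly in $\rho \in \mathcal{B}$ and $r>0$,
$$\mu_\rho(r) \le \|\rho\|_p \bigl(\omega_d r^d\bigr)^{1/p'} \le M \bigl(\omega_d r^d\bigr)^{1/p'}.$$
Consequently, choosing $\beta = \beta_M$ strictly smaller than $\bigl(\omega_d (N(N-1)^2)^{p'} M^{p'}\bigr)^{-1/d}$ ensures $\mu_\rho(\beta_M) < \frac{1}{N(N-1)^2}$ for every $\rho \in \mathcal{B}$ at once.

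Next I would fix $\alpha_M$ strictly below $\phi^{-1}\bigl(\tfrac{N^2(N-1)}{2}\phi(\beta_M)\bigr)$, as allowed by Theorem \ref{mainprop}. Any pair $\rho_1, \rho_2 \in \mathcal{B}$ then fits the hypotheses of the preceding theorem with this common $\alpha_M$, so
$$|C(\rho_1) - C(\rho_2)| \le N^2(N-1)^2 \, \phi\bigl(\alpha_M/2\bigr) \, \|\rho_1 - \rho_2\|_{L^1}.$$
The constant $L_M := N^2(N-1)^2\,\phi(\alpha_M/2)$ depends only on $M$ (and on $N$, $d$, $\phi$), yielding Lipschitz continuity on $\mathcal{B}$ with respect to the $L^1$ distance, which is the first assertion.

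For the second assertion, I would observe that $\mathcal{H}$ embeds into $L^3(\R^3)$: applying the Sobolev inclusion $H^1(\R^3) \hookrightarrow L^6(\R^3)$ to $\sqrt{\rho}$ gives $\|\rho\|_3 = \|\sqrt{\rho}\|_6^2 \le C\,\|\sqrt{\rho}\|_{H^1}^2$, so any subset of $\mathcal{H}$ bounded in the norm $\|\sqrt{\rho}\|_{H^1}$ is bounded in $L^3(\R^3)$. The first part then applies with $p=d=3$. There is no substantive obstacle in this proof; the only point requiring mild care is the passage from the non-strict H\"older bound $\mu_\rho \le \frac{1}{N(N-1)^2}$ to the strict concentration inequality demanded by assumption \textbf{(A)}, which is handled by shrinking $\beta$ slightly below the H\"older threshold.
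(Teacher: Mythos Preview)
Your argument is correct and is precisely the natural one the paper leaves implicit: the corollary is stated without proof, and the intended reasoning is exactly to combine the H\"older concentration bound from Proposition~\ref{lp} (yielding a uniform $\beta$ on $L^p$-bounded sets) with the preceding Lipschitz theorem, then invoke the Sobolev embedding $H^1(\R^3)\hookrightarrow L^6(\R^3)$ for the $\mathcal{H}$ case. Your care about passing to a strict inequality by shrinking $\beta$ is appropriate and matches what the preceding theorem requires.
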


\section*{Acknowledgement}

This paper has been written during some visits of the authors at the Department of Mathematics of University of Pisa and at the Laboratoire IMATH of University of Toulon. The authors gratefully acknowledge the warm hospitality of both institutions.

The research of the first and third authors is part of the project 2010A2TFX2 {\it Calcolo delle Variazioni} funded by the Italian Ministry of Research and is partially financed by the {\it``Fondi di ricerca di ateneo''} of the University of Pisa.

%

\bibliographystyle{plain}

\bigskip
{\small\noindent
Giuseppe Buttazzo:
Dipartimento di Matematica,
Universit\`a di Pisa\\
Largo B. Pontecorvo 5,
56127 Pisa - ITALY\\
{\tt buttazzo@dm.unipi.it}\\
{\tt http://www.dm.unipi.it/pages/buttazzo/}

\bigskip\noindent
Thierry Champion:
Laboratoire IMATH,
Universit\'e de Toulon\\
CS 60584,
83041 Toulon cedex 9 - FRANCE\\
{\tt champion@univ-tln.fr}\\
{\tt http://champion.univ-tln.fr}

\bigskip\noindent
Luigi De Pascale:
Dipartimento di Matematica,
Universit\`a di Pisa\\
Largo B. Pontecorvo 5,
56127 Pisa - ITALY\\
{\tt depascal@dm.unipi.it}\\
{\tt http://www.dm.unipi.it/\textasciitilde depascal/}

\end{document}